\newtheorem{theorem}{Theorem}[section]
\newtheorem{lemma}[theorem]{Lemma}
\newtheorem{corollary}[theorem]{Corollary}
\begin{document}
\textwidth 150mm \textheight 225mm
\title{Ordering digraphs with maximum outdegrees by their $A_{\alpha}$ spectral radius
\thanks{Supported by the National Natural Science Foundation of China (Nos. 12001434 and 12271439) and the Natural Science Basic Research Program of Shaanxi Province (No. 2022JM-006 and 2024JC-YBQN-0015).}}
\author{Zengzhao Xu$^{a}$, Weige Xi$^{a}$\footnote{Corresponding author.}, Ligong Wang$^{b}$\\
{\small $^{a}$ College of Science, Northwest A\&F University, Yangling, Shaanxi 712100, P.R. China}\\
{\small $^{b}$ School of Mathematics and Statistics, Northwestern Polytechnical University,}\\
{\small  Xi'an, Shaanxi 710129, P.R. China}\\
{\small E-mail: xuzz0130@163.com; xiyanxwg@163.com; lgwangmath@163.com}\\}
\date{}
\maketitle
\begin{center}
\begin{minipage}{120mm}
\vskip 0.3cm
\begin{center}
{\small {\bf Abstract}}
\end{center}
{\small  Let $G$ be a strongly connected digraph with $n$ vertices and $m$ arcs. For any real $\alpha\in[0,1]$, the $A_\alpha$ matrix of a digraph $G$ is defined as
$$A_\alpha(G)=\alpha D(G)+(1-\alpha)A(G),$$
where $A(G)$ is the adjacency matrix of $G$ and $D(G)$ is the outdegrees diagonal matrix of $G$.
The eigenvalue of $A_\alpha(G)$ with the largest modulus is called the $A_\alpha$ spectral radius of $G$, denoted by $\lambda_{\alpha}(G)$.
In this paper, we first obtain an upper bound on $\lambda_{\alpha}(G)$ for $\alpha\in[\frac{1}{2},1)$. Employing this upper bound, we prove that for two strongly connected digraphs $G_1$ and $G_2$ with $n\ge4$ vertices and $m$ arcs, and $\alpha\in [\frac{1}{\sqrt{2}},1)$, if the maximum outdegree $\Delta^+(G_1)\ge 2\alpha(1-\alpha)(m-n+1)+2\alpha$ and $\Delta^+(G_1)>\Delta^+(G_2)$, then $\lambda_\alpha(G_1)>\lambda_\alpha(G_2)$. Moreover, We also give another upper bound on $\lambda_{\alpha}(G)$ for $\alpha\in[\frac{1}{2},1)$. Employing this upper bound, we prove that for two strongly connected digraphs with $m$ arcs, and $\alpha\in[\frac{1}{2},1)$, if the maximum outdegree $\Delta^+(G_1)>\frac{2m}{3}+1$ and $\Delta^+(G_1)>\Delta^+(G_2)$, then $\lambda_\alpha(G_1)+\frac{1}{4}>\lambda_\alpha(G_2)$.

\vskip 0.1in \noindent {\bf Key Words}: \  Strongly connected digraphs, $A_\alpha$ spectral radius, Upper bounds, Maximum outdegree. \vskip
0.1in \noindent {\bf AMS Subject Classification (2020)}: \ 05C50, 15A18}
\end{minipage}
\end{center}

\section{Introduction }

Before delving into our research content, we will first provide a brief introduction. Our research results are based on considering the ancient problem of ordering graphs with various properties by their spectra, which can be traced back to the research conducted in 1981. The introduction section of our paper will be divided into two parts.

{\bf 1.1. Terminology, notation and related work}

Let $G=(V(G),E(G))$ be a digraph consists of vertex set $V(G)=\{v_1,v_2,\cdots,v_n\}$ and arc set $E(G)$. For an arc $(v_i,v_j)\in E(G)$, we call the first vertex $v_i$ is its tail and the second $v_j$ is its head, respectively. A digraph is simple if it has no loops and multiple arcs. For every pair of vertices $v_i, v_j\in V(G)$ in a digraph $G$, if there always exists a directed path from $v_i$ to $v_j$, we call $G$ is strongly connected. Throughout this paper all digraphs considered are simple and strongly connected.

For a simple strongly connected digraph $G$, we use $N^+(v_i)=\{v_j\in V(G)|(v_i,v_j)\in E(G)\}$ to denote the out-neighbors of $v_i$ in the digraph $G$. Let $N^+[v_i]$ denote the set $N^+(v_i)\cup \{v_i\}$. The outdegree of a vertex $v_i$ in the digraph $G$, denoted by $d_i^+$, is the number of arcs with tail $v_i$, i.e., $d_i^+=|N^+(v_i)|$. The maximum outdegree of $G$ is denoted by $\Delta^+(G)$, or simplify $\Delta^+$. Let $T_i^+=\sum\limits_{(v_i,v_j)\in E(G)}d_j^+$ denote the 2-outdegree of vertex $v_i$ and $m_i^+=\frac{T_i^+}{d_i^+}$ denote the average 2-outdegree of $v_i$. For other terminology and notations of digraphs, please refer to see \cite{BoMu,BG}.

For a strongly connected digraph $G$ with vertex set $V(G)=\{v_1,v_2,\ldots,v_n\}$ and arc set $E(G)$, the adjacency matrix of $G$ is defined as $A(G)=(a_{ij})_{n\times n}$, where $a_{ij}=1$ if $(v_i,v_j)\in E(G)$ and $a_{ij}=0$ otherwise. Let $D(G)=\textrm{diag}(d^+_1,d^+_2,\ldots,d^+_n)$ be the diagonal matrix with outdegrees of vertices of $G$. We call $Q(G)=D(G)+A(G)$ the signless Laplacian matrix of $G$. The eigenvalue of $Q(G)$ with the largest modulus is called the signless Laplacian spectral radius of $G$, denoted by $q(G)$. In 2019, Liu et al. \cite{LWCL} defined the $A_\alpha$ matrix of $G$ as
$$A_\alpha(G)=\alpha D(G)+(1-\alpha) A(G), \ \textrm{where} \  0\le\alpha\le1.$$
It is clear that
$$A_0(G)=A(G),\ \ \ 2A_\frac{1}{2}(G)=Q(G).$$
Therefore, the matrix $A_\alpha(G)$ extends both $A(G)$ and $Q(G)$. Thus, it is very important to study the matrix $A_\alpha(G)$ of $G$. The eigenvalue of $A_\alpha(G)$ with the largest modulus is called the $A_\alpha$ spectral radius of $G$, denoted by $\lambda_\alpha(G)$. For $\alpha\in[0,1)$, if $G$ is a strongly connected digraph, then $A_\alpha(G)$ is nonnegative irreducible. In this case, it follows from the Perron Frobenius Theorem \cite{HJ} that $\lambda_\alpha(G)$ is an eigenvalue of $A_\alpha(G)$, and there is a unique positive unit eigenvector corresponding to $\lambda_\alpha(G)$. In the case when $\alpha=1$, $A_1(G)=D(G)$, which makes no sense. Therefore, we only consider that $0\leq \alpha<1$ in the rest of this paper.

The studying on the spectral radius of graphs and digraphs is an important topic in the spectral graph theory. Recently, more and more attention has been paid to the spectral properties of $A_\alpha$ matrix of a digraph. For example, in \cite{XWW}, Xi et al. determined the digraphs which attain the maximum (or minimum) $A_\alpha$ spectral radius among all strongly connected digraphs with given parameters such as girth, clique number, vertex connectivity and arc connectivity. In \cite{XW}, Xi and Wang established some lower bounds on $\Delta^+(G)-\lambda_{\alpha}(G)$ for
strongly connected irregular digraph $G$ with given maximum outdegree and some
other parameters. In \cite{BGGA}, Baghipur et al. obtained some sharp bounds on the $A_\alpha$ spectral radius of digraphs in terms of some parameters such as the outdegrees, the maximum outdegree, the number of vertices, the number of arcs and the parameter $\alpha$. Some other works on the spectral
properties of $A_\alpha$ matrix of a digraph can be found in \cite{GB,XSLC,XW1,YBW,YLY}.

{\bf 1.2 Background motivation for the presented results.}

In 1981, Cvetkovi\'{c} \cite{CV} proposed twelve
directions for further research in the theory of graph spectra, one of which is classifying and ordering graphs. Since then, ordering graphs with various properties by their spectra, particularly by their largest eigenvalue, has become a popular research topic. At present, there is no simple and generality method for ordering graphs by their spectra. In order to find a solution to the problem, Liu et al. \cite{LLC} put forward an idea: whether can we transfer the comparison of spectra of two graphs to the comparison of some parameters of them? By this means, the comparison of spectra of two graphs become much easier. Then, Liu et al. \cite{LLC} proved that for two connected graphs $H_1$ and $H_2$ with $n$ vertices and $m$ edges, if the maximum degree $\Delta(H_1)\ge m-\frac{n-3}{2}$ and $\Delta(H_1)> \Delta(H_2)$, then the signless Laplacian spectral radii of $H_1$ and $H_2$ satisfy $q(H_1)>q(H_2)$. Guo and Zhang \cite{GZ} extended the conclusion of Liu et al. \cite{LLC} to $A_\alpha$ matrix of graphs, they proved that for two connected graphs $H_1$ and $H_2$ with $n$ vertices and $m$ edges, if $\alpha\in[\frac{1}{2},1)$, the maximum degree $\Delta(H_1)\ge 2\alpha(1-\alpha)(2m-n+1)+2\alpha$ and $\Delta(H_1)> \Delta(H_2)$, then the $A_\alpha$ spectral radii of $H_1$ and $H_2$ satisfy $\lambda_{\alpha}(H_1)>\lambda_{\alpha}(H_2)$. Later, Zhang and Guo \cite{ZG} gave a new conclusion of comparing the signless Laplacian spectral radius of graphs, which they proved that for two connected graphs $H_1$ and $H_2$ with size $m\ge4$, if the maximum degree $\Delta(H_1)\ge \frac{2m}{3}+1$ and $\Delta(H_1)> \Delta(H_2)$, then the signless Laplacian spectral radii of $H_1$ and $H_2$ satisfy $q(H_1)>q(H_2)$. Ye et al. \cite{YGZ} extended the conclusion of Guo and Zhang, they proved that for two connected $H_1$ and $H_2$ with $n$ vertices and $m$ edges, if $\alpha\in [\frac{1}{2},1)$, the maximum degree $\Delta(H_1)\ge\frac{2m}{3}+1$ and $\Delta(H_1)> \Delta(H_2)$, then the $A_\alpha$ spectral radii of $H_1$ and $H_2$ satisfy $\lambda_{\alpha}(H_1)>\lambda_{\alpha}(H_2)$. These are the classic papers on classifying and ordering undirected graphs.

Up to now, there are a few results on classifying and ordering digraphs. Motivated by the results obtained in \cite{GZ} and \cite{YGZ}, in this paper we study the problem of ordering digraphs with various properties by their related spectra radius. The main purpose of our paper is to extend some results to digraphs, and our main results are as follows.

\begin{theorem} Let $G_1$ and $G_2$ be two strongly connected digraphs with $n\ge4$ vertices and $m$ arcs. For $\alpha\in [\frac{1}{\sqrt{2}},1)$, if $\Delta^+(G_1)\ge 2\alpha(1-\alpha)(m-n+1)+2\alpha$ and $\Delta^+(G_1)>\Delta^+(G_2)$, then $\lambda_\alpha(G_1)>\lambda_\alpha(G_2)$.
\end{theorem}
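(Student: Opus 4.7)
The plan is to follow the Guo-Zhang recipe from the undirected $A_\alpha$ setting: apply the upper bound on $\lambda_\alpha$ established earlier in the paper to $G_2$, derive an elementary lower bound on $\lambda_\alpha(G_1)$ in terms of $\Delta^+(G_1)$, and use the hypothesis on $\Delta^+(G_1)$ to force the lower bound to exceed the upper bound.

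First, for the lower bound on $\lambda_\alpha(G_1)$, I would let $x=(x_1,\ldots,x_n)^T$ be the unique positive Perron eigenvector of $A_\alpha(G_1)$ and write out the eigen-equations $\lambda_\alpha(G_1)\,x_i = \alpha d_i^+ x_i + (1-\alpha)\sum_{v_j\in N^+(v_i)} x_j$. Evaluating at a vertex $v_s$ with $d_s^+ = \Delta^+(G_1)$ and combining with the equations at its out-neighbours (a standard two-step iteration) yields a lower bound $\lambda_\alpha(G_1) \geq L\bigl(\alpha,\Delta^+(G_1)\bigr)$ strong enough to meet the later comparison. Second, the upper bound proved earlier in the paper has schematic form $\lambda_\alpha(G) \leq U(\alpha,\Delta^+(G),m,n)$ with $U$ monotonically increasing in $\Delta^+$. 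Since all outdegrees are integers and $\Delta^+(G_1) > \Delta^+(G_2)$, we have $\Delta^+(G_2) \leq \Delta^+(G_1)-1$, so
\[
\lambda_\alpha(G_2) \;\leq\; U\bigl(\alpha,\Delta^+(G_1)-1,m,n\bigr).
\]
The theorem then reduces to the purely algebraic inequality $L(\alpha,\Delta^+(G_1)) > U(\alpha,\Delta^+(G_1)-1,m,n)$ under the assumptions on $\Delta^+(G_1)$, $n$, and $\alpha$.

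The main obstacle is precisely this final algebraic comparison. Since $U$ will typically involve a square root inherited from the two-step eigenvector estimate, clearing radicals produces a polynomial inequality in $\Delta^+(G_1)$ whose coefficients depend on $\alpha$, $m$, $n$ in an intricate way. The threshold $\Delta^+(G_1) \geq 2\alpha(1-\alpha)(m-n+1)+2\alpha$ is exactly what is needed to make the dominant term nonnegative, while the restriction $\alpha \geq 1/\sqrt{2}$ is almost certainly forced by the sign of a coefficient of the form $2\alpha^2-1$ that arises once the radical is squared away; the assumption $n\geq 4$ controls a small-case residue coming from the cyclomatic-type quantity $m-n+1$. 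Once this inequality has been checked, chaining $\lambda_\alpha(G_1)\geq L > U \geq \lambda_\alpha(G_2)$ finishes the proof.
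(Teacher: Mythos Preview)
Your plan has a structural gap. The upper bound actually proved in the paper (Theorem~3.1) is
\[
\lambda_\alpha(G)\le \max\Bigl\{\alpha\Delta^+(G),\ \tfrac{1-\alpha}{2}(m-n+1)\Bigr\}+2\alpha,
\]
with no square root anywhere. The only available lower bound is the elementary one $\lambda_\alpha(G_1)>\alpha\Delta_1^+$ from Lemma~2.1; your proposed ``two-step iteration'' lower bound is not in the paper and you do not write it down, so it cannot carry the argument. With these two ingredients your monotonicity-plus-substitute-$\Delta_1^+-1$ scheme fails outright: when the first term of the max dominates one gets
\[
U(\alpha,\Delta_1^+-1,m,n)=\alpha(\Delta_1^+-1)+2\alpha=\alpha\Delta_1^+ +\alpha>\alpha\Delta_1^+,
\]
so $L>U$ is simply false, and no amount of algebra with the hypothesis on $\Delta_1^+$ fixes this.

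The paper's proof is not a single comparison but a three-way case split on $\Delta_2^+$. In Case~1 ($\Delta_2^+\ge 2\alpha(1-\alpha)(m-n+1)$) it abandons Theorem~3.1 and uses the sharper intermediate bound $\lambda_\alpha(G_2)\le \alpha\Delta_2^++\frac{1-\alpha}{\Delta_2^+}(m-n+1)$ from the proof of that theorem; subtracting from $\alpha\Delta_1^+$ gives $\alpha-\frac{1}{2\alpha}\ge 0$, and \emph{this} is where $\alpha\ge 1/\sqrt{2}$ enters (not from clearing a radical). In Case~3 the hypothesis $\Delta_1^+\ge 2\alpha(1-\alpha)(m-n+1)+2\alpha$ combined with $\Delta_2^+<2\alpha(1-\alpha)(m-n+1)$ forces $\Delta_1^+-\Delta_2^+>2\alpha\ge\sqrt{2}$, hence $\Delta_1^+-\Delta_2^+\ge 2$, which is exactly the extra integer gap needed to beat the $+2\alpha$ in Theorem~3.1. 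Your outline misses both of these moves.
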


\begin{theorem} Let $G_1$ and $G_2$ be two strongly connected digraphs with $m$ arcs. For $\alpha\in[\frac{1}{2},1)$, if $\Delta^+(G_1)>\frac{2m}{3}+1$ and $\Delta^+(G_1)>\Delta^+(G_2)$, then $\lambda_\alpha(G_1)+\frac{1}{4}>\lambda_\alpha(G_2)$.
\end{theorem}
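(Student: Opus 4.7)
The plan is to combine the second upper bound on $\lambda_\alpha(G)$ announced in the abstract (an inequality of the form $\lambda_\alpha(G)\le \Phi_\alpha(\Delta^+(G),m)$ for some explicit function $\Phi_\alpha$ that is non-decreasing in its first argument) with a matching lower bound for $\lambda_\alpha(G_1)$, and then to compare the two as functions of $\Delta^+(G_1)$ and $m$ in the regime $\Delta^+(G_1)>\frac{2m}{3}+1$.

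First, I would record the elementary lower bound $\lambda_\alpha(G_1)\ge \alpha\Delta^+(G_1)$. Letting $x$ be the positive Perron eigenvector of $A_\alpha(G_1)$ and $v_i$ a vertex with $d_i^+=\Delta^+(G_1)$, the $i$-th coordinate of $A_\alpha(G_1)x=\lambda_\alpha(G_1)x$ reads $\lambda_\alpha(G_1)x_i=\alpha\Delta^+(G_1)x_i+(1-\alpha)\sum_{j\in N^+(v_i)}x_j$, and positivity of $x$ gives the claim. If a tiny extra slack is needed to fit under the $\frac{1}{4}$ budget, one can refine this using strong connectedness (which forces some in-neighbour of $v_i$ to contribute, producing a strict improvement).

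Next, by the hypothesis $\Delta^+(G_1)>\Delta^+(G_2)$ and the monotonicity of $\Phi_\alpha$ in its first variable, the upper bound applied to $G_2$ yields $\lambda_\alpha(G_2)\le \Phi_\alpha(\Delta^+(G_1)-1,m)$. The theorem is therefore reduced to the real-variable inequality
$$\Phi_\alpha(\Delta^+(G_1)-1,m)-\alpha\Delta^+(G_1)<\tfrac{1}{4}$$
for all $\alpha\in[\frac12,1)$, $m\ge 1$, and integers $\Delta^+(G_1)>\frac{2m}{3}+1$. I would handle this by treating the left-hand side as a function of $\Delta^+(G_1)$ and checking monotonicity; since the bound $\Phi_\alpha$ will almost certainly involve a square-root term of shape $\alpha\Delta^++\text{(lower order)}+\sqrt{(\cdot)^2+(1-\alpha)^2 g(m,\Delta^+)}$, the plan is to move the $\alpha\Delta^+$ to the left, clear the square root by squaring, and then verify a polynomial inequality in $\alpha$ and $m$. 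Using $\Delta^+(G_1)>\frac{2m}{3}+1$, the dominant terms cancel and one is left estimating lower-order remainders.

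The hard part will be the last step: the $\frac{1}{4}$ gap is tight, so the upper bound must be almost saturated at the threshold $\Delta^+=\frac{2m}{3}+1$. In the undirected analogue of Zhang–Guo and Ye–Guo–Zhang one obtains strict inequality with no additive constant, so the presence of the $\frac{1}{4}$ here signals the loss incurred by working with the non-symmetric matrix $A_\alpha(G)$ on a digraph — the directed analogue of the 2-degree average $m_i^+$ cannot be controlled as sharply. Pinpointing exactly where this loss is absorbed, and showing that $\frac{1}{4}$ is indeed enough, is the main technical bookkeeping task; the rest is a comparison of an explicit upper bound with a simple lower bound, both of which are already in hand.
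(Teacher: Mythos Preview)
Your high-level strategy --- combine the lower bound $\lambda_\alpha(G_1)>\alpha\Delta^+(G_1)$ with the paper's second upper bound applied to $G_2$, then compare --- is exactly what the paper does. But two of your concrete predictions are off, and one of them hides a genuine gap.

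First, the upper bound is not of the square-root type you anticipate. It reads: if $k\ge 1$ is an integer with $\Delta^+(G)\le m-k$ and $m\ge 3k$, then
\[
\lambda_\alpha(G)\le \alpha(m-k)+\frac{(1-\alpha)k}{m-k}+1-\alpha.
\]
So the final comparison is elementary arithmetic (bounding $\frac{k}{m-k}\le\frac12$ and using $k_2\ge k_1+1$), not a squaring-and-polynomial argument. The $\tfrac14$ emerges as the maximum of $\tfrac32-\tfrac52\alpha$ over $\alpha\in[\tfrac12,1)$.

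Second, and more importantly, your plan to invoke monotonicity and plug in $\Delta^+(G_1)-1$ glosses over the side condition $m\ge 3k$. Writing $\Delta^+(G_2)=m-k_2$, the bound with $k=k_2$ is only available when $k_2\le m/3$, i.e.\ when $\Delta^+(G_2)\ge 2m/3$. Nothing in the hypotheses forces this: $\Delta^+(G_2)$ may be tiny. The paper handles this with a case split. When $m\ge 3k_2$ one applies the bound directly with $k=k_2$. When $m<3k_2$ one instead takes $l=\lfloor m/3\rfloor$; then $m\ge 3l$ and $\Delta^+(G_2)=m-k_2<m-l$, so the bound applies with $k=l$, and the hypothesis $\Delta^+(G_1)>\tfrac{2m}{3}+1$ gives $k_1+1\le l$, which is exactly what is needed to push the estimate through. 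This substitution of $l$ for $k_2$ is the one non-obvious step, and your outline does not account for it.
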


The rest of this paper is organized as follows. In Section 2, we give some lemmas to prove the theorems in the following sections. In Section 3, we first present an upper bound on the $A_\alpha$ spectral radius of digraphs, and using this upper bound, we prove Theorem 1.1. In Section 4, we also present another upper bound on the $A_\alpha$ spectral radius of digraphs, and using this upper bound, we prove Theorem 1.2.

\section{Preliminaries}

In this section, we give some lemmas which will be used in the follows. The following observation can be found in \cite{XWW}.

\noindent\begin{lemma}\label{le:1}(\cite{XWW})
Let $G$ be a strongly connected digraph with the maximum outdegree $\Delta^+$. Then
$\lambda_\alpha(G)>\alpha\Delta^+$.
\end{lemma}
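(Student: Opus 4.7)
The plan is to apply the Perron--Frobenius theorem directly to the Perron eigenvector of $A_\alpha(G)$ and read off a single coordinate. As the introduction already records, because $G$ is strongly connected and $\alpha\in[0,1)$ the matrix $A_\alpha(G)$ is nonnegative and irreducible, so there is a strictly positive eigenvector $x=(x_1,\dots,x_n)^T$ with $A_\alpha(G)\,x=\lambda_\alpha(G)\,x$. I will use only this positivity, together with the decomposition $A_\alpha(G)=\alpha D(G)+(1-\alpha)A(G)$.

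Pick any vertex $v_k$ realizing $d_k^+=\Delta^+$. Reading the $k$-th coordinate of the eigenvalue equation gives
\[
\lambda_\alpha(G)\,x_k \;=\; \alpha\,d_k^+\,x_k+(1-\alpha)\!\!\sum_{v_j\in N^+(v_k)}\!\!x_j \;=\; \alpha\Delta^+\,x_k+(1-\alpha)\!\!\sum_{v_j\in N^+(v_k)}\!\!x_j.
\]
Since $G$ is strongly connected, $v_k$ has at least one out-neighbor, so the sum is over a nonempty set of strictly positive numbers and is therefore positive; combined with $1-\alpha>0$, the second term on the right is strictly positive. Dividing both sides by $x_k>0$ yields $\lambda_\alpha(G)>\alpha\Delta^+$.

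The argument has essentially no technical obstacle: the only thing to be careful about is invoking the strict positivity of $x$ (which is guaranteed by irreducibility, not just nonnegativity, of $A_\alpha(G)$) and noticing that strong connectivity forces every outdegree, including $\Delta^+$, to be at least $1$ so that the sum over $N^+(v_k)$ is genuinely nonempty. If one prefers to avoid naming the Perron eigenvector, an equivalent route is to apply the variational characterization $\lambda_\alpha(G)=\max_{y>0}\min_i(A_\alpha(G)y)_i/y_i$ to a strictly positive test vector, but this produces the same inequality with more bookkeeping, so extracting a coordinate of the Perron eigenvector is the cleanest plan.
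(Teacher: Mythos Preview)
The paper does not supply a proof of this lemma; it is merely quoted from \cite{XWW}. Your argument is correct and is the standard one: reading the $k$-th coordinate (with $d_k^+=\Delta^+$) of the Perron eigenequation and using that the Perron vector is strictly positive together with $N^+(v_k)\neq\emptyset$ (forced by strong connectivity on $n\ge 2$ vertices) and $1-\alpha>0$ immediately gives the strict inequality.
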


The following observation can be found in \cite{BGGA}, which is a generalization of the related result of undirected graphs.

\begin{lemma}\label{le:2}(\cite{BGGA}) \ Let $G=(V(G), E(G))$ be a strongly
connected digraph of order $n$ having outdegrees $d_1^+\ge d_2^+\ge\cdots\ge d_n^+$. Let $m_i^+$ be the average 2-outdegree of the vertex $v_i$, where $1\le i\le n$ and $\alpha\in[0,1)$. Then
\begin{equation}\label{eq:1}
	\min\{\alpha d_i^++(1-\alpha)m_i^+,v_i\in V(G)\}\le \lambda_\alpha(G)\le\max\{\alpha d_i^++(1-\alpha)m_i^+,v_i\in V(G)\}.
\end{equation}
Equality is attained on either side of (\ref{eq:1}) if and only if $\alpha d_i^++(1-\alpha)m_i^+$ is the same for all $1\le i\le n$.
\end{lemma}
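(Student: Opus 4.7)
The plan is to reduce the two-sided estimate to the classical row-sum bound for the Perron root of a nonnegative irreducible matrix, by means of a diagonal similarity that recasts $\alpha d_i^+ + (1-\alpha)m_i^+$ as the $i$-th row sum. Let $D = \mathrm{diag}(d_1^+,\ldots,d_n^+)$; strong connectivity of $G$ forces $d_i^+ \geq 1$ for every $i$, so $D$ is invertible. Set $M = D^{-1}A_\alpha(G)D$. Being similar to $A_\alpha(G)$, the matrix $M$ shares its spectrum and Perron root $\lambda_\alpha(G)$, and it inherits nonnegativity and irreducibility.

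Next I would compute the row sums of $M$ entrywise. Using $(A_\alpha(G))_{ii} = \alpha d_i^+$ and $(A_\alpha(G))_{ij}=(1-\alpha)a_{ij}$ for $j \neq i$, one obtains $M_{ii} = \alpha d_i^+$ and $M_{ij} = (1-\alpha)a_{ij}d_j^+/d_i^+$. Summing over $j$ and invoking $T_i^+ = \sum_{(v_i,v_j) \in E(G)} d_j^+$ and $m_i^+ = T_i^+/d_i^+$ yields
$$\sum_{j=1}^n M_{ij} = \alpha d_i^+ + (1-\alpha) m_i^+ =: r_i.$$
Applying the standard estimate $\min_i r_i \leq \rho(M) \leq \max_i r_i$ for the spectral radius of a nonnegative matrix then delivers the desired two-sided inequality.

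For the equality characterization, the sufficient direction is direct: if all $r_i$ equal a common value $r$, then $M\mathbf{1} = r\mathbf{1}$, so the positive vector $\mathbf{1}$ is an eigenvector of $M$; by Perron--Frobenius applied to the irreducible nonnegative $M$, this forces $r = \lambda_\alpha(G)$, making both ends of the inequality tight. For the converse, assume the upper bound is tight, that is $\lambda_\alpha(G) = \max_i r_i$. Let $y > 0$ be the left Perron eigenvector of $M$, so $y^\top M = \lambda_\alpha(G) y^\top$. Pairing against $\mathbf{1}$ gives
$$\sum_{i=1}^{n} y_i r_i = y^\top M \mathbf{1} = \lambda_\alpha(G) \sum_{i=1}^n y_i,$$
hence $\sum_i y_i(\lambda_\alpha(G) - r_i) = 0$. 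Each summand is nonnegative and $y_i > 0$, so $r_i = \lambda_\alpha(G)$ for every $i$; the equality case for the lower bound is treated symmetrically.

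The main obstacle is precisely this equality characterization: because $A_\alpha(G)$ is not symmetric in the digraph setting, one cannot simply read equality off a Rayleigh-quotient argument with the right Perron eigenvector. Passing to the positive left Perron eigenvector and exploiting its strict positivity, guaranteed by irreducibility, is the cleanest way to close this gap; everything else is bookkeeping with the definitions of $T_i^+$ and $m_i^+$ together with the Perron--Frobenius dichotomy.
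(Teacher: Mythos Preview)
Your argument is correct. The paper does not actually supply a proof of this lemma; it is quoted from \cite{BGGA} and used as a black box, so there is no in-paper proof to compare against. The approach you outline---conjugating $A_\alpha(G)$ by the outdegree diagonal $D$ so that the $i$-th row sum becomes $\alpha d_i^+ + (1-\alpha)m_i^+$, then invoking the Perron--Frobenius row-sum bounds and handling the equality case via strict positivity of the left Perron eigenvector---is the standard way this type of result is established and is exactly how the original reference proceeds. Nothing is missing.
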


\section{The proof of Theorem 1.1}

For a connected graph with $n\ge4$ vertices and $m$ edges, Guo and Zhang \cite{GZ} proved that
\begin{equation*}
	\lambda_{\alpha}(G)\le \max\left\{\alpha\Delta(G),(1-\alpha)(m-\frac{n-1}{2})\right\}+2\alpha.
\end{equation*}

In this paper, we extend this result to $A_{\alpha}$ spectral radius of a strongly connected digraph.

\begin{theorem}\label{T:1} Let $G$ be a strongly connected digraph with $n\ge4$ vertices and $m$ arcs. If $\alpha\in[\frac{1}{2},1)$, then
	\begin{equation*}
		\lambda_{\alpha}(G)\le \max \left\{\alpha\Delta^+(G),(1-\alpha)\frac{m-n+1}{2}\right\}+2\alpha.
	\end{equation*}
\end{theorem}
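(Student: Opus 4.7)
The natural starting point is Lemma \ref{le:2}, which reduces the theorem to showing that
\[
\alpha d_i^+ + (1-\alpha)m_i^+ \le \max\!\left\{\alpha\Delta^+, \tfrac{1-\alpha}{2}(m-n+1)\right\} + 2\alpha
\]
for every $v_i\in V(G)$. Denote this right-hand side by $M$. My plan is to estimate $T_i^+$ via a counting argument based on strong connectedness, then split on the size of $d_i^+$ and finish with a one-variable convexity argument.

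The first step I would carry out is the uniform bound $T_i^+\le m-n+1$. Writing $T_i^+ = m-\sum_{v_j\notin N^+(v_i)}d_j^+$, the complement of $N^+(v_i)$ in $V(G)$ has $n-d_i^+$ vertices, one of which is $v_i$ itself (contributing $d_i^+$), while the remaining $n-1-d_i^+$ vertices each contribute at least $1$ by strong connectedness. Summing these contributions gives at least $n-1$, which yields $T_i^+\le m-n+1$ and hence $m_i^+\le (m-n+1)/d_i^+$.

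Next I would handle $d_i^+=1$ separately, since the bound $(m-n+1)/d_i^+=m-n+1$ is too weak in that case. Using instead the trivial estimate $m_i^+\le \Delta^+$, a short computation gives $\bigl(\alpha+(1-\alpha)\Delta^+\bigr)-\bigl(\alpha\Delta^++2\alpha\bigr) = (1-2\alpha)\Delta^+-\alpha\le 0$ for $\alpha\ge\tfrac12$, so $\alpha d_i^++(1-\alpha)m_i^+\le\alpha\Delta^++2\alpha\le M$. For $d_i^+\ge 2$ I would apply the combinatorial bound and study
\[
f(d) = \alpha d + \frac{(1-\alpha)(m-n+1)}{d}, \qquad d\in[2,\Delta^+].
\]
Since $m\ge n$ for a strongly connected digraph, $f''(d)>0$, so $f$ is convex and attains its maximum on $[2,\Delta^+]$ at an endpoint. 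Checking $d=2$ gives $f(2)=2\alpha+\tfrac{1-\alpha}{2}(m-n+1)\le M$ directly from the definition of $M$.

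The hard part is the endpoint $d=\Delta^+$, where I would split on which branch of $M$ dominates. If $\alpha\Delta^+\ge\tfrac{1-\alpha}{2}(m-n+1)$, then rearranging gives $(1-\alpha)(m-n+1)/\Delta^+\le 2\alpha$, so $f(\Delta^+)\le\alpha\Delta^++2\alpha\le M$. Otherwise $\alpha\Delta^+<\tfrac{1-\alpha}{2}(m-n+1)$, and the desired inequality $f(\Delta^+)\le\tfrac{1-\alpha}{2}(m-n+1)+2\alpha$ reduces, after clearing, to $\alpha(\Delta^+-2)\le\tfrac{(1-\alpha)(m-n+1)(\Delta^+-2)}{2\Delta^+}$; this is trivial when $\Delta^+=2$, and for $\Delta^+>2$ cancelling $\Delta^+-2$ recovers exactly the subcase hypothesis. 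This final $\max$-branch dichotomy is the main obstacle, since everywhere else the argument is a straightforward consequence of strong connectedness plus convexity; its resolution is clean only because the threshold $\alpha\Delta^+ \gtrless \tfrac{1-\alpha}{2}(m-n+1)$ separating the two subcases is precisely the one that makes both branches tight simultaneously.
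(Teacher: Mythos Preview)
Your proof is correct and follows essentially the same route as the paper's: Lemma~\ref{le:2}, the combinatorial bound $T_i^+\le m-n+1$ via strong connectedness, the separate treatment of $d_i^+=1$, and convexity of $f(d)=\alpha d+(1-\alpha)(m-n+1)/d$ on $[2,\Delta^+]$. Your endgame---splitting directly on which branch of the $\max$ dominates---is a slightly cleaner organization than the paper's, which first compares $f(2)$ with $f(\Delta^+)$ and then handles $f(2)<f(\Delta^+)$ via a quadratic factorization, but the substance is identical.
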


\begin{proof} To simplify the process, let $\Delta^+=\Delta^+(G)$, by Lemma \ref{le:2},
	\small{\begin{equation*}
	 \lambda_\alpha(G)\le\max\left\{\alpha d_i^++(1-\alpha)m_i^+,v_i\in V(G)\right\}=\max\left\{\alpha d_i^++(1-\alpha)\frac{\sum\limits_{v_j\in N^+(v_i)}d_j^+}{d_i^+},v_i\in V(G)\right\}.
	\end{equation*}}
Let $u$ be a vertex of $G$ that satisfies the condition
	 	\begin{equation*}
	 	\alpha d_u^++(1-\alpha)\frac{\sum\limits_{v_j\in N^+(u)}d_j^+}{d_u^+}=\max\left\{\alpha d_i^++(1-\alpha)\frac{\sum\limits_{v_j\in N^+(v_i)}d_j^+}{d_i^+},v_i\in V(G)\right\}.
	 \end{equation*}
Since $1\le d_u^+\le \Delta^+$, then we will discuss the proof in two ways according to the value of $d_u^+$.
	
If $d_u^+=1$, for $\alpha\in[\frac{1}{2},1)$, it is easy to see that
	$$\alpha+(1-\alpha)\Delta^+<\max \left\{\alpha\Delta^+,(1-\alpha)\frac{m-n+1}{2}\right\}+2\alpha.$$
Thus, by Lemma \ref{le:2},
$$\lambda_\alpha(G)<\max \left\{\alpha\Delta^+,(1-\alpha)\frac{m-n+1}{2}\right\}+2\alpha.$$

Based on the above result, we assume that $2\le d_u^+\le \Delta^+$ in the follows. Considering that
$$m=\sum\limits_{v_i\in V(G)}d_i^+=d_u^++\sum\limits_{v_i\in N^+(u)}d_i^++\sum\limits_{v_i\in V(G)-N^+[u]}d_i^+,$$
then
\begin{align*}
	\sum\limits_{v_i\in N^+(u)}d_i^+&=m-d_u^+-\sum\limits_{v_i\in V(G)-N^+[u]}d_i^+\\
	&\le m-d_u^+-(n-1-d_u^+)\\
	&=m-n+1,
\end{align*}
with the equality holds if and only if $d_i^+=1$ for $v_i\in V(G)-N^+[u]$.

Furthermore, we have
 $$\lambda_\alpha(G)\le\alpha d_u^++(1-\alpha)\frac{\sum\limits_{v_i\in N^+(u)}d_i^+}{d_u^+}\le\alpha d_u^++(1-\alpha)\frac{m-n+1}{d_u^+}.$$
Let
 $$h(x)=\alpha x+(1-\alpha)\frac{m-n+1}{x}.$$
For $\alpha\in[\frac{1}{2},1)$ and $x>0$, by a simple calculation, we get $$h''(x)=2\frac{1-\alpha}{x^3}(m-n+1)>0.$$
Thus, for $x>0$, the function $h(x)$ is convex and its maximum value is attained at one of the endpoints of any closed interval. Therefore, for $2\le d_u^+\le \Delta^+$,
\begin{align*}
	\lambda_\alpha(G)&\le\alpha d_u^++(1-\alpha)\frac{m-n+1}{d_u^+}\\
	&\le \max\left\{2\alpha+\frac{1-\alpha}{2}(m-n+1),\alpha\Delta^++\frac{1-\alpha}{\Delta^+}(m-n+1)\right\}.
\end{align*}
Hence, in order to obtain Theorem \ref{T:1}, we only need to prove the following inequality. $$\max\left\{2\alpha+\frac{1-\alpha}{2}(m-n+1),\alpha\Delta^++\frac{1-\alpha}{\Delta^+}(m-n+1)\right\}\le\max\left\{\alpha\Delta^+,\frac{1-\alpha}{2}(m-n+1)\right\}+2\alpha.$$

To simplify the calculation process, let $t=m-n+1$ and
$$f(x)=\alpha x+(1-\alpha)\frac{t}{x},x\in[2,\Delta^+].$$
Then we just need to prove
$$\max\left\{f(2),f(\Delta^+)\right\}\le\max\left\{\alpha\Delta^+,\frac{1-\alpha}{2}t\right\}+2\alpha=\max\left\{\alpha\Delta^++2\alpha,f(2)\right\}.$$

{\bf Case 1.} $f(2)\ge f(\Delta^+).$

In this case, we can easily verify that
$$\max\left\{f(2),f(\Delta^+)\right\}=f(2)\le\max\left\{\alpha\Delta^++2\alpha,f(2)\right\}.$$

{\bf Case 2.} $f(2)< f(\Delta^+).$

In this case, $\max\left\{f(2),f(\Delta^+)\right\}=f(\Delta^+)$
and $f(2)=2\alpha+\frac{1-\alpha}{2}t<\alpha\Delta^++\frac{1-\alpha}{\Delta^+}t=f(\Delta^+)$.
By simplifying this inequality, we can get $\alpha(\Delta^+)^2-(2\alpha+\frac{1-\alpha}{2}t)\Delta^++(1-\alpha)t>0.$
For the equation $\alpha x^2-(2\alpha+\frac{1-\alpha}{2}t)x+(1-\alpha)t=0$, it has two roots
$$x=\frac{2\alpha+\frac{1-\alpha}{2}t\pm \sqrt{(2\alpha-\frac{1-\alpha}{2}t)^2}}{2\alpha}.$$
Since $f(2)< f(\Delta^+)$, to prove that $\max\left\{f(2),f(\Delta^+)\right\}\le\max\left\{\alpha\Delta^++2\alpha,f(2)\right\}$, we only need to prove
$$f(\Delta^+)=\alpha\Delta^++\frac{1-\alpha}{\Delta^+}t\le\alpha\Delta^++2\alpha\Longleftrightarrow \frac{1-\alpha}{\Delta^+}t\le 2\alpha.$$
In the following, we consider the following two subcases by comparing the value of $2\alpha$ and $\frac{1-\alpha}{2}t$.

{\bf Case 2.1.} $2\alpha\ge\frac{1-\alpha}{2}t$.

In this case, we get $t\le \frac{4\alpha}{1-\alpha}$. Since $\Delta^+\ge2$, it is easy to get
$$\frac{1-\alpha}{\Delta^+}t\le\frac{1-\alpha}{2}t\le\frac{1-\alpha}{2}\cdot \frac{4\alpha}{1-\alpha}=2\alpha.$$

{\bf Case 2.2.} $2\alpha<\frac{1-\alpha}{2}t$.

In this case, the two roots of the equation $\alpha x^2-(2\alpha+\frac{1-\alpha}{2}t)x+(1-\alpha)t=0$ are
$$x_1=\frac{1-\alpha}{2\alpha}t,\quad x_2=2.$$
Since $2\alpha<\frac{1-\alpha}{2}t$, we have $\frac{1-\alpha}{2\alpha}t>2, i.e., x_1>x_2.$
Since $\Delta^+\ge 2=x_2$ and $\Delta^+$ satisfies $\alpha(\Delta^+)^2-(2\alpha+\frac{1-\alpha}{2}t)\Delta^++(1-\alpha)t>0$, combined with the characteristics of quadratic function, we can conclude that $\Delta^+>\frac{1-\alpha}{2\alpha}t$. Thus $\frac{1-\alpha}{\Delta^+}t< 2\alpha$ and
$$f(\Delta^+)=\alpha\Delta^++\frac{1-\alpha}{\Delta^+}t<\alpha\Delta^++2\alpha\le\max\left\{\alpha\Delta^++2\alpha,f(2)\right\}.$$

Based on Case 1 and Case 2, we can finally get
$$\max\left\{2\alpha+\frac{1-\alpha}{2}(m-n+1),\alpha\Delta^++\frac{1-\alpha}{\Delta^+}(m-n+1)\right\}\le\max\left\{\alpha\Delta^+,\frac{1-\alpha}{2}(m-n+1)\right\}+2\alpha.$$
Therefore
$$\lambda_{\alpha}(G)\le \max\left\{\alpha\Delta^+(G),(1-\alpha)\frac{m-n+1}{2}\right\}+2\alpha,$$
this completes the proof.
\end{proof}

Taking $\alpha=\frac{1}{2}$, we can obtain an upper bound on the signless Laplacian spectral radius of a digraph.

\noindent\begin{corollary} Let $G$ be a strongly connected digraph with $n\ge4$ vertices and $m$ arcs. Let $q(G)$ be the signless Laplacian spectral radius of $G$, then
	\begin{equation*}
		q(G)\le \max \left\{\Delta^+(G),\frac{m-n+1}{2}\right\}+2.
	\end{equation*}
\end{corollary}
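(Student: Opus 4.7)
The plan is to deduce this corollary as an immediate specialization of Theorem \ref{T:1} at the endpoint $\alpha=\tfrac{1}{2}$, so the proof is essentially a one-line computation rather than a fresh argument.

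First I would recall from the definition $A_\alpha(G)=\alpha D(G)+(1-\alpha)A(G)$ that $2A_{1/2}(G)=D(G)+A(G)=Q(G)$. Since multiplying a matrix by the positive scalar $2$ scales all its eigenvalues by $2$ and preserves which one has the largest modulus, this yields the identity $q(G)=2\lambda_{1/2}(G)$ already noted in the introduction.

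Next, since $\alpha=\tfrac{1}{2}$ lies in the range $[\tfrac{1}{2},1)$ in which Theorem \ref{T:1} is valid and the hypothesis $n\ge 4$ transfers directly, I would apply that theorem with $\alpha=\tfrac{1}{2}$, factor the positive constant $\tfrac{1}{2}$ out of the maximum on the right-hand side, and then multiply through by $2$. After substituting $q(G)=2\lambda_{1/2}(G)$, the resulting inequality is exactly the claimed bound. There is no genuine obstacle: the only two checks are that the endpoint $\alpha=\tfrac{1}{2}$ is included in the hypothesis range of Theorem \ref{T:1} (it is, being the left endpoint) and that pulling a positive scalar out of a $\max$ of nonnegative quantities is legitimate, which is standard.
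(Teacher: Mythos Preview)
Your proposal is correct and matches the paper's own approach: the corollary is stated immediately after Theorem \ref{T:1} with the remark ``Taking $\alpha=\frac{1}{2}$,'' and your derivation via $q(G)=2\lambda_{1/2}(G)$ and scaling by $2$ is exactly the intended specialization. One minor note: pulling a positive scalar out of a $\max$ works for arbitrary real arguments, not just nonnegative ones, so that caveat is unnecessary.
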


By using Theorem \ref{T:1}, we can prove Theorem 1.1.

{\bf Proof of Theorem 1.1:}

\begin{proof}

Let $\Delta^+_1=\Delta^+(G_1)$ and $\Delta^+_2=\Delta^+(G_2)$. Using Lemma \ref{le:2}, we get that
\begin{equation*}
	\lambda_\alpha(G_2)\le\max\left\{\alpha d_i^++(1-\alpha)m_i^+,v_i\in V(G_2)\right\}.
\end{equation*}
Let $u$ be a vertex of $G_2$ that satisfies the condition
\begin{equation*}
	\alpha d_u^++(1-\alpha)m_u^+=\max\left\{\alpha d_i^++(1-\alpha)m_i^+,v_i\in V(G_2)\right\}.
\end{equation*}
Since $1\le d_u^+\le \Delta^+$, then we will discuss the proof in two ways according to the value of $d_u^+$.

If $d_u^+=1$, for $\alpha\in[\frac{1}{\sqrt{2}},1)$, it is easy to see
$$\lambda_\alpha(G_2)\le\alpha+(1-\alpha)\Delta_2^+.$$
Since $\Delta_1^+\ge\Delta_2^++1$, we can conclude
\begin{align*}
	\lambda_\alpha(G_1)&>\alpha\Delta_1^+\ge \alpha(\Delta_2^++1)=\alpha\Delta_2^++\alpha\ge (1-\alpha)\Delta_2^++\alpha\ge \lambda_\alpha(G_2).
\end{align*}

If $2\le d_u^+\le \Delta_2^+$, by the proof of Theorem \ref{T:1}, we get
\begin{equation}\label{eq2}
	\lambda_\alpha(G_2)\le \max\left\{2\alpha+\frac{1-\alpha}{2}(m-n+1),\alpha\Delta_2^++\frac{1-\alpha}{\Delta_2^+}(m-n+1)\right\}.
\end{equation}

Next, we will prove that $\lambda_{\alpha}(G_1) > \lambda_\alpha(G_2)$ in the following cases.

{\bf Case 1.} $\Delta_2^+\ge 2\alpha(1-\alpha)(m-n+1)$.

Note that
\begin{align*}
	& \quad \quad \quad \alpha\Delta_2^++\frac{1-\alpha}{\Delta_2^+}(m-n+1)\ge 2\alpha+\frac{1-\alpha}{2}(m-n+1)\\
	&\Longleftrightarrow 2\alpha(\Delta_2^+)^2-(4\alpha+(1-\alpha)(m-n+1))\Delta_2^++2(1-\alpha)(m-n+1)\ge0\\
	&\Longleftrightarrow [2\alpha\Delta_2^+-(1-\alpha)(m-n+1)](\Delta_2^+-2)\ge0\\
	&\Longleftrightarrow\Delta_2^+\ge\frac{1-\alpha}{2\alpha}(m-n+1).
\end{align*}
Since $\alpha\in[\frac{1}{\sqrt{2}},1)$, we have
 $$\Delta_2^+\ge2\alpha(1-\alpha)(m-n+1)> \frac{1-\alpha}{2\alpha}(m-n+1).$$
Thus, one can easy see that $$\alpha\Delta_2^++\frac{1-\alpha}{\Delta_2^+}(m-n+1)\ge 2\alpha+\frac{1-\alpha}{2}(m-n+1).$$
Furthermore, it follows that $$\lambda_\alpha(G_2)\le \alpha\Delta_2^++\frac{1-\alpha}{\Delta_2^+}(m-n+1).$$
Since $\Delta_2^+\ge2\alpha(1-\alpha)(m-n+1)$ and $\Delta_1^+>\Delta_2^+$,
 \begin{align*}
 	\lambda_{\alpha}(G_1)-\lambda_{\alpha}(G_2)&>\alpha\Delta_1^+-\alpha\Delta_2^+-\frac{1-\alpha}{\Delta_2^+}(m-n+1)\\
 	&=\alpha(\Delta_1^+-\Delta_2^+)-\frac{1-\alpha}{\Delta_2^+}(m-n+1)\\
 	&\ge\alpha(\Delta_1^+-\Delta_2^+)-\frac{1-\alpha}{2\alpha(1-\alpha)(m-n+1)}(m-n+1)\\
 	&\ge \alpha-\frac{1}{2\alpha}.
 \end{align*}
Note that $\alpha\ge \frac{1}{\sqrt{2}}$, which implies that $\lambda_{\alpha}(G_1)>\lambda_{\alpha}(G_2)$.

{\bf Case 2.} $\Delta_1^+-\Delta_2^+\ge2\alpha(1-\alpha)(m-n+1)$.

Since $\Delta_2^+\ge2$, $\Delta_1^+-2\ge \Delta_1^+-\Delta_2^+$, through simple calculation, we can easily prove that
\begin{align*}
	 \alpha\Delta_1^+-2\alpha-\frac{1-\alpha}{2}(m-n+1) &\ge\alpha(\Delta_1^+-\Delta_2^+)-\frac{1-\alpha}{2}(m-n+1)\\
	&\ge 2\alpha^2(1-\alpha)(m-n+1)-\frac{1-\alpha}{2}(m-n+1)\\
	&=\frac{1-\alpha}{2}(4\alpha^2-1)(m-n+1)>0,
\end{align*}
and
\begin{align*}
	\alpha\Delta_1^+-\alpha\Delta_2^+-\frac{1-\alpha}{\Delta_2^+}(m-n+1) &\ge\alpha(\Delta_1^+-\Delta_2^+)-\frac{1-\alpha}{2}(m-n+1)\\
	&\ge 2\alpha^2(1-\alpha)(m-n+1)-\frac{1-\alpha}{2}(m-n+1)\\
	&=\frac{1-\alpha}{2}(4\alpha^2-1)(m-n+1)>0.
\end{align*}
By (\ref{eq2}) and Lemma \ref{le:1}, we obtain
 $\lambda_{\alpha}(G_1)>\alpha\Delta_1^+>\lambda_{\alpha}(G_2).$

{\bf Case 3.} $\Delta_2^+<2\alpha(1-\alpha)(m-n+1)$ and $\Delta_1^+-\Delta_2^+<2\alpha(1-\alpha)(m-n+1).$

Using Theorem \ref{T:1}, we get
$$\lambda_{\alpha}(G_2)\le \max\left\{\alpha\Delta^+(G_2),(1-\alpha)\frac{m-n+1}{2}\right\}+2\alpha.$$
For $\alpha\in[\frac{1}{\sqrt{2}},1)$, since $\Delta_1^+\ge 2\alpha(1-\alpha)(m-n+1)+2\alpha$, we have
$$\Delta_1^+-\Delta_2^+>2\alpha(1-\alpha)(m-n+1)+2\alpha-2\alpha(1-\alpha)(m-n+1)=2\alpha\ge\sqrt{2}>1.$$
Hence, $\Delta_1^+-\Delta_2^+\ge2$ and
$$\alpha\Delta_1^+-\alpha\Delta_2^+-2\alpha=\alpha(\Delta_1^+-\Delta_2^+)-2\alpha\ge0.$$
On the other hand, $\Delta_1^+\ge 2+\Delta_2^+\ge4$,
$$m=\sum_{i=1}^{n}d_i^+(G_1)\ge 4+(n-1)=n+3.$$
Thus, $m-n+1\ge 4$ and
\begin{align*}
	\alpha\Delta_1^+-\frac{1-\alpha}{2}(m-n+1)-2\alpha &\ge2\alpha^2(1-\alpha)(m-n+1)+2\alpha^2-\frac{1-\alpha}{2}(m-n+1)-2\alpha\\
	&=2\alpha^2(1-\alpha)(m-n+1)-2\alpha(1-\alpha)-\frac{1-\alpha}{2}(m-n+1)\\
	&=\frac{1-\alpha}{2}[4\alpha^2(m-n+1)-4\alpha-(m-n+1)]\\
	&\ge\frac{1-\alpha}{2}[2(m-n+1)-4\alpha-(m-n+1)]\\
	&\ge \frac{1-\alpha}{2}(4-4\alpha)>0.
\end{align*}
Therefore, $\lambda_{\alpha}(G_1)>\alpha\Delta_1^+>\lambda_{\alpha}(G_2)$.

Based on the above process, we have ultimately completed the proof of Theorem 1.1.
\end{proof}

{\bf Remark:} For $\alpha\in [\frac{1}{2},\frac{1}{\sqrt{2}})$, the method used in the proof of Theorem 1.1 may not be true. Because in Case 1, we have $\lambda_{\alpha}(G_1)-\lambda_{\alpha}(G_2)\ge \alpha-\frac{1}{2\alpha}$ and $\alpha-\frac{1}{2\alpha}<0$ under this condition. Thus, we may not be able to get $\lambda_{\alpha}(G_1)>\lambda_{\alpha}(G_2)$. Therefore, we pose a question: whether is there a new method that can lead to the conclusion holding at $\alpha\in [\frac{1}{2},\frac{1}{\sqrt{2}})$?

\section{The proof of Theorem 1.2}

Let $k\ge1$ be an integer, for a connected graph $H$ with fixed size $m$ and maximum degree $\Delta\le m-k$. If $m\ge3k$ and $\alpha\in[\frac{1}{2},1)$, Ye et al. \cite{YGZ} proved that
\begin{equation*}
	\lambda_{\alpha}(H)\le \alpha(m-k)+\frac{2(1-\alpha)k}{m-k}+1-\alpha.
\end{equation*}

In this paper, we extend this result to $A_{\alpha}$ spectral radius of a strongly connected digraph.

\begin{theorem}\label{T:3} Let $k\ge1$ be an integer and $G$ be a strongly connected digraph with $m$ arcs and maximum outdegree $\Delta^+(G)\le m-k$. If $m\ge3k$ and $\alpha\in[\frac{1}{2},1)$, then
	\begin{equation}\label{eq:10}
		\lambda_{\alpha}(G)\le \alpha(m-k)+\frac{(1-\alpha)k}{m-k}+1-\alpha.
	\end{equation}
\end{theorem}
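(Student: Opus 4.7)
The plan is to follow the template of the proof of Theorem \ref{T:1}: apply Lemma \ref{le:2} to select a vertex $u\in V(G)$ that realises $\max_i\{\alpha d_i^++(1-\alpha)m_i^+\}$, set $d:=d_u^+$ and $S:=\sum_{v_j\in N^+(u)}d_j^+$, and then bound $\lambda_\alpha(G)\le\alpha d+(1-\alpha)S/d$ using the structural hypothesis $\Delta^+(G)\le m-k$.

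Two simple bounds on $S$ are at my disposal. Trivially $S\le\sum_{v\in V(G)}d_v^+=m$. On the other hand, since $N^+(u)$ contains exactly $d$ vertices, each of outdegree at most $\Delta^+\le m-k$, I also have $S\le d(m-k)$. Combining these two estimates,
$$\lambda_\alpha(G)\le\varphi(d):=\alpha d+(1-\alpha)\min\!\left\{m-k,\,\tfrac{m}{d}\right\},\quad d\in[1,m-k].$$

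The next step is to analyse $\varphi$ on $[1,m-k]$. Provided that $(m-k)^2\ge m$, the transition point $m/(m-k)$ lies in $[1,m-k]$; on $[1,m/(m-k)]$ the function $\varphi(d)=\alpha d+(1-\alpha)(m-k)$ is linear and increasing in $d$, and on $[m/(m-k),m-k]$ the function $\varphi(d)=\alpha d+(1-\alpha)m/d$ is strictly convex, hence maximised at one of the endpoints of this sub-interval. So the global maximum of $\varphi$ on $[1,m-k]$ is attained either at $d=m/(m-k)$ or at $d=m-k$, and a direct subtraction gives
$$\varphi(m-k)-\varphi\!\left(\tfrac{m}{m-k}\right)=(2\alpha-1)\!\left[(m-k)-\tfrac{m}{m-k}\right],$$
which is nonnegative for $\alpha\in[\tfrac{1}{2},1)$ once $(m-k)^2\ge m$ is established. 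Consequently $\varphi(d)\le\alpha(m-k)+(1-\alpha)m/(m-k)$, and rewriting $m/(m-k)=1+k/(m-k)$ produces the right-hand side of (\ref{eq:10}).

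The only real obstacle is the elementary inequality $(m-k)^2\ge m$ under the hypotheses $m\ge 3k$ and $k\ge 1$. Since $m\ge 3k$ forces $m-k\ge 2k$, one has $(m-k)^2\ge 2k(m-k)$, and the residual requirement $2k(m-k)\ge m$ rearranges to $m\ge 2k^2/(2k-1)$, which is comfortably weaker than $m\ge 3k$ for every $k\ge 1$. With this verification in place, the endpoint comparison above closes the argument and the theorem follows.
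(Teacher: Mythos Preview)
Your argument is correct and, in fact, cleaner than the paper's. Both proofs start from Lemma~\ref{le:2} and pick a vertex $u$ attaining the maximum of $\alpha d_i^++(1-\alpha)m_i^+$, but they diverge in how they bound $S=\sum_{v_j\in N^+(u)}d_j^+$. The paper uses only the single estimate $S\le m-d_u^+$, which forces it to treat the cases $d_u^+=1$, $d_u^+=2$, and the small sizes $m=3,4$ separately before analysing the convex function $h(x)=\alpha x+(1-\alpha)m/x$ on $[3,\Delta^+]$ via a further split according to whether $\Delta^+\ge\max\{3,(1-\alpha)m/(3\alpha)\}$. Your device of combining \emph{two} bounds, $S\le m$ and $S\le d(m-k)$, into $\varphi(d)=\alpha d+(1-\alpha)\min\{m-k,m/d\}$ handles all values of $d\in[1,m-k]$ uniformly: the linear piece absorbs the small-$d$ regime that the paper treats by hand, and the endpoint comparison $(2\alpha-1)[(m-k)-m/(m-k)]\ge0$ replaces the paper's case analysis on $\Delta^+$. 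The verification that $(m-k)^2\ge m$ under $m\ge 3k$, $k\ge1$ is the only technical residue, and you dispatch it cleanly. One incidental observation: because the paper uses the sharper estimate $S\le m-d_u^+$, its argument actually yields the stronger inequality $\lambda_\alpha(G)\le\alpha(m-k)+(1-\alpha)k/(m-k)$ whenever $d_u^+\ge3$, though it does not state this; your approach trades that extra precision for a shorter, case-free proof of the theorem as stated.
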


\begin{proof}
Let $k\ge1$ be an integer and $G$ be a strongly connected digraph with $m$ arcs and maximum outdegree $\Delta^+(G)\le m-k$. Let $\Delta^+=\Delta^+(G)$, using Lemma \ref{le:2}, we get
	\small{\begin{equation*}
		\lambda_\alpha(G)\le\max\left\{\alpha d_i^++(1-\alpha)m_i^+,v_i\in V(G)\right\}=\max\left\{\alpha d_i^++(1-\alpha)\frac{\sum\limits_{v_j\in N^+(v_i)}d_j^+}{d_i^+},v_i\in V(G)\right\}.
	\end{equation*}}
	
If $m=3$ or 4, nothing that $m\ge3k$, we can infer that $k=1$. In the case when $m=3$, then $G=G_1$($G_1$ is a digraph as shown in Figure 1) and
	$$\alpha(m-k)+\frac{(1-\alpha)k}{m-k}+1-\alpha=\frac{1}{2}\alpha+\frac{3}{2}.$$
By Lemma \ref{le:2}, we can easily verify that $\lambda_{\alpha}(G)\le\alpha+1-\alpha=1<\frac{1}{2}\alpha+\frac{3}{2}$.
\begin{figure}[H]
	\begin{centering}
		\includegraphics[scale=0.5]{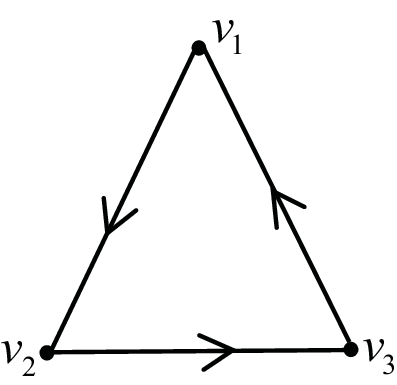}
		\caption{The digraph $G_1$.}\label{Fig.1.}
	\end{centering}
\end{figure}
In the case when $m=4$, the digraphs that satisfy the condition are shown in Figure 2. Using the same way, we can verify that (\ref{eq:10}) holds.
	\begin{figure}[H]
	\begin{centering}
		\includegraphics[scale=0.5]{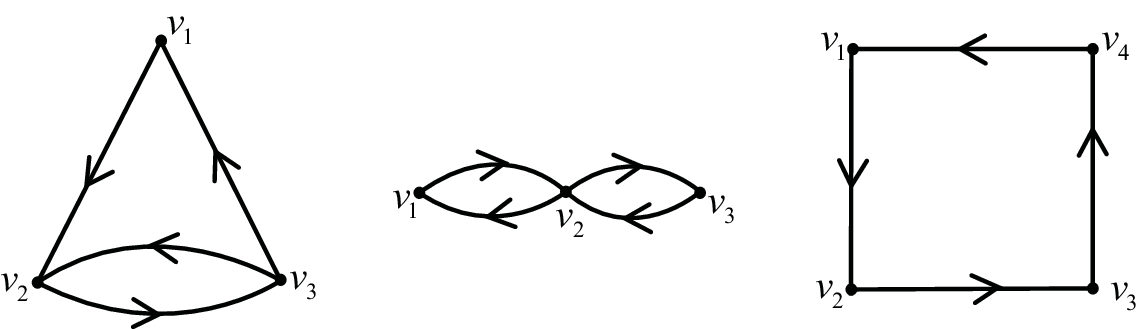}
		\caption{The digraphs with size $m=4$.}\label{Fig.2.}
	\end{centering}
\end{figure}
Based on the above results, we just consider the case when $m\ge5$ in the follows.
	
Let $u$ be a vertex of $G$ that satisfies the condition
	\begin{equation*}
		\alpha d_u^++(1-\alpha)\frac{\sum\limits_{v_j\in N^+(u)}d_j^+}{d_u^+}=\max\left\{\alpha d_i^++(1-\alpha)\frac{\sum\limits_{v_j\in N^+(v_i)}d_j^+}{d_i^+},v_i\in V(G)\right\}.
	\end{equation*}
Since $1\le d_u^+\le \Delta^+\le m-k$, then we will discuss the proof in two ways according to the value of $d_u^+$.
	
In the case when $d_u^+=1$, for $\alpha\in[\frac{1}{2},1)$ and $m\ge3k$, it is easy to see that
	$$\alpha+(1-\alpha)\Delta^+\le\alpha+(1-\alpha)(m-k)<\alpha(m-k)+\frac{(1-\alpha)k}{m-k}+1-\alpha.$$
Thus, by Lemma \ref{le:2},
	$$\lambda_\alpha(G)<\alpha(m-k)+\frac{(1-\alpha)k}{m-k}+1-\alpha.$$
	
In the case when $d_u^+=2$, for $\alpha\in[\frac{1}{2},1)$ and $m\ge3k$, then by Lemma \ref{le:2},
	$$\lambda_{\alpha}(G)<2\alpha+(1-\alpha)\frac{m}{2}.$$
Moreover,
	$$\alpha(m-k)+\frac{(1-\alpha)k}{m-k}+1-\alpha-[2\alpha+(1-\alpha)\frac{m}{2}]=\frac{(2+k-m)(2\alpha k+m-3\alpha m)}{2(m-k)}.$$
Note that $2+k-m\le 2-2k\le0$, then we will check that $2\alpha k+m-3\alpha m<0\Longleftrightarrow \alpha>\frac{m}{3m-2k}$.
Since $m\ge3k$ and $\alpha\in[\frac{1}{2},1)$, we can easy to see that $\alpha\ge\frac{1}{2}>\frac{m}{3m-2k}$.
Hence,
	$$\lambda_\alpha(G)<2\alpha+(1-\alpha)\frac{m}{2}<\alpha(m-k)+\frac{(1-\alpha)k}{m-k}+1-\alpha.$$
	
Based on the above results, we assume that $3\le d_u^+\le \Delta^+\le m-k$ in the follows. Noting that
	$$m=\sum\limits_{v_i\in V(G)}d_i^+=d_u^++\sum\limits_{v_i\in N^+(u)}d_i^++\sum\limits_{v_i\in V(G)-N^+[u]}d_i^+.$$
Thus, one can easily prove that
	\begin{align*}
		\sum\limits_{v_i\in N^+(u)}d_i^+&=m-d_u^+-\sum\limits_{v_i\in V(G)-N^+[u]}d_i^+\le m-d_u^+,
	\end{align*}
where the equality holds if and only if $V(G)-N^+[u]=\emptyset$, which implies $V(G)=\{u\}\cup N^+(u)$. Therefore,
	$$\lambda_\alpha(G)\le\alpha d_u^++(1-\alpha)\frac{\sum\limits_{v_i\in N^+(u)}d_i^+}{d_u^+}\le\alpha d_u^++(1-\alpha)\frac{m-d_u^+}{d_u^+}.$$
Let
	$$h(x)=\alpha x+(1-\alpha)\frac{m}{x}.$$
For $\alpha\in[\frac{1}{2},1)$ and $x>0$, by a simple calculation, we get
     $$h'(x)=\alpha-(1-\alpha)\frac{m}{x^2}.$$
Hence $h(x)$ is increasing with respect to $x\in(\sqrt{\frac{1-\alpha}{\alpha}m},+\infty)$ and decreasing with respect to $x\in(0,\sqrt{\frac{1-\alpha}{\alpha}m})$.
Nothing that
	 $$h''(x)=2\frac{1-\alpha}{x^3}m>0.$$
Thus, the function $h(x)$ is convex for $x>0$ and its maximum value is attained at one of the endpoints of any closed interval. Therefore, in the case when $3\le d_u^+\le \Delta^+$, we get
	\begin{align*}
		\lambda_\alpha(G)&\le\alpha d_u^++(1-\alpha)\frac{m}{d_u^+}+\alpha-1\\
		&\le \max\left\{h(3),h(\Delta)\right\}+\alpha-1.
	\end{align*}	
	
	{\bf Case 1.} $\Delta^+\ge \max\{3,\frac{1-\alpha}{3\alpha}m\}$.
	
In this case, noting that the function $h(x)$ is convex for $x>0$ and $h(3)=h(\frac{1-\alpha}{3\alpha}m)$, we have $h(3)\le h(\Delta^+)$. Thus,
	$$\lambda_{\alpha}(G)\le h(\Delta^+)+\alpha-1.$$
	
Next, we will compare 3 and $\frac{1-\alpha}{3\alpha}m$.
	
If $3\le\frac{1-\alpha}{3\alpha}m$, one can easily get that
	$$3\le \sqrt{\frac{1-\alpha}{\alpha}m}\le\frac{1-\alpha}{3\alpha}m\le\Delta^+\le m-k.$$
	
In the case when $3>\frac{1-\alpha}{3\alpha}m$, we have
		$$\sqrt{\frac{1-\alpha}{\alpha}m}<3\le\Delta^+\le m-k.$$
Therefore, based on the above process, we can always obtain the result that $h(\Delta^+)\le h(m-k)$.

Hence,
	$$\lambda_{\alpha}(G)\le h(m-k)+\alpha-1=\alpha(m-k)+\frac{(1-\alpha)k}{m-k}<\alpha(m-k)+\frac{(1-\alpha)k}{m-k}+1-\alpha.$$
	
	{\bf Case 2.} $\Delta^+< \max\{3,\frac{1-\alpha}{3\alpha}m\}$.
	
In this case, since $\Delta^+\ge3$, we have $3<\frac{1-\alpha}{3\alpha}m$.
Thus, $\Delta^+<\frac{1-\alpha}{3\alpha}m$ and
	$$3<\sqrt{\frac{1-\alpha}{\alpha}m}<\frac{1-\alpha}{3\alpha}m.$$
Therefore, $h(3)=h(\frac{1-\alpha}{3\alpha}m)\ge h(\Delta^+)$ and
	$$\lambda_{\alpha}(G)\le h(3)+\alpha-1.$$
Nothing that $m\ge 3k$ and $\alpha\in[\frac{1}{2},1)$, we can easily verify that $\frac{1-\alpha}{3\alpha}m<m-k$. Moreover, $h(3)=h(\frac{1-\alpha}{3\alpha}m)$ and $h(x)$ is increasing with respect to $x\in(\sqrt{\frac{1-\alpha}{\alpha}m},+\infty)$, then
	$$\lambda_{\alpha}(G)\le h(m-k)+\alpha-1=\alpha(m-k)+\frac{(1-\alpha)k}{m-k}<\alpha(m-k)+\frac{(1-\alpha)k}{m-k}+1-\alpha.$$
	
Therefore, combining by the above process, we can finally prove the conclusion
\begin{equation*}
	\lambda_{\alpha}(G)\le \alpha(m-k)+\frac{(1-\alpha)k}{m-k}+1-\alpha,
\end{equation*}
this completes the proof.
\end{proof}

Let $\alpha=\frac{1}{2}$, we can obtain an upper bound on the signless Laplacian spectral radius of a digraph.

\noindent\begin{corollary} Let $k\ge1$ be an integer and $G$ be a strongly connected digraph with $m$ arcs and maximum outdegree $\Delta^+(G)\le m-k$. Let $q(G)$ be the signless Laplacian spectral radius of $G$.
If $m\ge3k$, then
	\begin{equation*}
		q(G)\le m-k+1+\frac{k}{m-k}.
	\end{equation*}
\end{corollary}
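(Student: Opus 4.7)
The plan is to derive this corollary as an immediate specialization of Theorem~\ref{T:3} to the parameter value $\alpha=\tfrac{1}{2}$, using the identity $Q(G)=2A_{1/2}(G)$ recorded in the introduction. First I would check that the hypotheses of Theorem~\ref{T:3} are satisfied under the assumptions of the corollary: since $G$ is strongly connected with $m$ arcs, $\Delta^+(G)\le m-k$, $m\ge 3k$, and $\alpha=\tfrac{1}{2}\in[\tfrac{1}{2},1)$, every condition transfers verbatim.

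Next I would invoke the relation between the $A_\alpha$ and signless Laplacian spectral radii. Because $G$ is strongly connected, $A_{1/2}(G)$ is nonnegative and irreducible, so by Perron--Frobenius (as quoted in Section~1), $\lambda_{1/2}(G)$ is the eigenvalue of $A_{1/2}(G)$ with the largest modulus. Multiplication by the positive scalar $2$ scales every eigenvalue by $2$, so $q(G)$, being the largest-modulus eigenvalue of $Q(G)=2A_{1/2}(G)$, equals $2\lambda_{1/2}(G)$.

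The final step is a direct substitution. Plugging $\alpha=\tfrac{1}{2}$ into the bound of Theorem~\ref{T:3} gives
\begin{equation*}
\lambda_{1/2}(G)\le \tfrac{1}{2}(m-k)+\frac{(1/2)k}{m-k}+\tfrac{1}{2},
\end{equation*}
and multiplying by $2$ yields
\begin{equation*}
q(G)=2\lambda_{1/2}(G)\le (m-k)+\frac{k}{m-k}+1,
\end{equation*}
which is exactly the claimed inequality. Since the argument is a routine specialization together with a rescaling, there is no real obstacle; the only care needed is the rescaling identity $q(G)=2\lambda_{1/2}(G)$, which is immediate from $Q(G)=2A_{1/2}(G)$ and the fact that the largest-modulus eigenvalue is homogeneous of degree one under multiplication by a positive scalar.
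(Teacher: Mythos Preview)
Your proposal is correct and matches the paper's own treatment: the corollary is stated immediately after Theorem~\ref{T:3} with the remark that one simply takes $\alpha=\tfrac{1}{2}$, using $Q(G)=2A_{1/2}(G)$ to pass from $\lambda_{1/2}(G)$ to $q(G)$. No additional ideas are involved.
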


By Theorem \ref{T:3}, we can prove Theorem 1.2.

{\bf Proof of Theorem 1.2:}

\begin{proof}
	
Let $\Delta^+_1=\Delta^+(G_1)=m-k_1$ and $\Delta^+_2=\Delta^+(G_2)=m-k_2$, where $0\le k_1,k_2<m$ are both integer.
	
Noting that $\Delta^+(G_1)>\Delta^+(G_2)$ and $\Delta^+(G_1)>\frac{2m}{3}+1$, one can easily verify that $k_1<k_2$ and $k_1\le \frac{1}{3}m-1.$
	
In the case when $k_1=0$, we have $\Delta_1^+=m$, which implies $G_1$ is a digraph as shown in Figure 3. This contradicts the fact that $G_1$ is strongly connected.
\begin{figure}[H]
	\begin{centering}
		\includegraphics[scale=0.5]{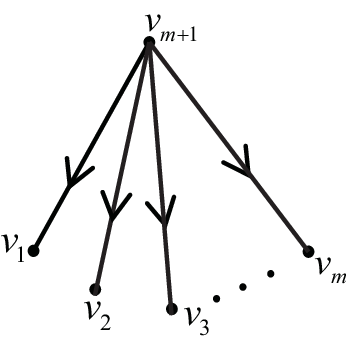}
		\caption{The digraph $G_1$.}\label{Fig.3.}
	\end{centering}
\end{figure}
	
In the case when $k_1\ge 1$, by Lemma \ref{le:1},
	$$\lambda_{\alpha}(G_1)>\alpha\Delta_1^+=\alpha(m-k_1).$$
For $m\ge 3k_2$, since $k_2>k_1$, then $m-k_2\le m-k_1-1$ and $\frac{k_2}{m-k_2}\le \frac{1}{2}$.
Thus, by Theorem \ref{T:3}, we can easily verify that
	\begin{align*}
		\lambda_{\alpha}(G_2)&\le \alpha(m-k_2)+\frac{1-\alpha}{m-k_2}k_2+1-\alpha\\
		&\le \alpha(m-k_1-1)+\frac{1}{2}(1-\alpha)+1-\alpha\\
		&\le  \alpha(m-k_1)+\frac{3}{2}-\frac{5}{2}\alpha\\
		&\le \alpha\Delta_1^++\frac{1}{4}\ \ \ (\Delta_1^+=m-k_1)\\
		&<\lambda_{\alpha}(G_1)+\frac{1}{4}.
	\end{align*}
In the case when $m<3k_2$, let $l=\lfloor \frac{1}{3}m \rfloor$. One can easily get that $m\ge 3l$ and $l\le k_2$. It follows that $m-k_2\le m-l$ and $\frac{l}{m-l}\le \frac{1}{2}$. Since $k_1\le \frac{1}{3}m-1$, it is easy to verify that $k_1+1\le l$. Thus, $m\ge 3l$ and $\Delta^+_2=m-k_2\le m-l$.
Then, by Theorem \ref{T:3} and Lemma \ref{le:1},
		\begin{align*}
		\lambda_{\alpha}(G_2)&\le \alpha(m-l)+\frac{1-\alpha}{m-l}l+1-\alpha\\
		&\le \alpha(m-k_1-1)+\frac{1}{2}(1-\alpha)+1-\alpha\\
		&\le  \alpha(m-k_1)+\frac{3}{2}-\frac{5}{2}\alpha\\
		&<\lambda_{\alpha}(G_1)+\frac{1}{4}.
	\end{align*}
Therefore, combining with the above process, we can finally prove the conclusion
	\begin{equation}
		\lambda_{\alpha}(G_2)<\lambda_{\alpha}(G_1)+\frac{1}{4},
	\end{equation}
this completes the proof.
\end{proof}

{\bf Remark:} In the proof of Theorem 1.2, we have $\lambda_{\alpha}(G_2)\le  \alpha(m-k_1)+\frac{3}{2}-\frac{5}{2}\alpha$. If $\alpha\in[\frac{3}{5},1)$, one can easily verify that $\frac{3}{2}-\frac{5}{2}\alpha\le0$, which implies that $\lambda_{\alpha}(G_2)\le \alpha(m-k_1)=\alpha\Delta_1^+<\lambda_{\alpha}(G_1)$. Therefore, we can get the following corollary:

\begin{corollary} Let $G_1$ and $G_2$ be two strongly connected digraphs with $m$ arcs. For $\alpha\in[\frac{3}{5},1)$, if $\Delta^+(G_1)>\Delta^+(G_2)$ and $\Delta^+(G_1)>\frac{2m}{3}+1$, then $\lambda_\alpha(G_1)>\lambda_\alpha(G_2).$
\end{corollary}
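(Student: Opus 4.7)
The plan is to reuse essentially verbatim the upper bound for $\lambda_\alpha(G_2)$ that was already established in the proof of Theorem 1.2, and then sharpen the conclusion by exploiting that for $\alpha\ge\frac{3}{5}$ the residual term $\frac{3}{2}-\frac{5}{2}\alpha$ becomes non-positive. In other words, the whole corollary is a numerical strengthening of Theorem 1.2 in a smaller range of $\alpha$, and no new structural argument about digraphs is required.

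Concretely, I would begin by copying the setup of the proof of Theorem 1.2: write $\Delta^+_1=m-k_1$ and $\Delta^+_2=m-k_2$ with integers $0\le k_1,k_2<m$, observe that the hypotheses $\Delta^+(G_1)>\Delta^+(G_2)$ and $\Delta^+(G_1)>\frac{2m}{3}+1$ force $k_1<k_2$ and $k_1\le\frac{m}{3}-1$, and rule out $k_1=0$ by strong connectivity just as in Theorem 1.2. Then I split into the two sub-cases $m\ge 3k_2$ and $m<3k_2$; in the second sub-case I introduce $l=\lfloor m/3\rfloor$, so that $m\ge 3l$, $k_1+1\le l\le k_2$, and Theorem~\ref{T:3} can be applied with $k$ replaced by $l$. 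In both sub-cases the exact chain of inequalities already performed in the proof of Theorem 1.2 gives the bound
\[
\lambda_\alpha(G_2)\le \alpha(m-k_1)+\frac{3}{2}-\frac{5}{2}\alpha.
\]

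The final step is the decisive new observation. For $\alpha\in[\frac{3}{5},1)$ one has $\frac{3}{2}-\frac{5}{2}\alpha\le 0$, and so the above estimate collapses to
\[
\lambda_\alpha(G_2)\le \alpha(m-k_1)=\alpha\Delta_1^+.
\]
Combining this with Lemma~\ref{le:1}, which yields the strict inequality $\lambda_\alpha(G_1)>\alpha\Delta_1^+$, gives $\lambda_\alpha(G_1)>\lambda_\alpha(G_2)$, as required. There is no real obstacle in this argument: every nontrivial inequality has already been verified inside the proof of Theorem 1.2, and the only thing to check independently is the elementary bound $\frac{3}{2}-\frac{5}{2}\alpha\le 0$ for $\alpha\ge\frac{3}{5}$, which is immediate. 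The minor point worth flagging is that the two sub-cases in the proof of Theorem 1.2 must both be re-examined to confirm that each one terminates with the same estimate $\alpha(m-k_1)+\frac{3}{2}-\frac{5}{2}\alpha$; this is already explicit in the existing proof, so the corollary follows with essentially no additional computation.
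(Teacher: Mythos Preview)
Your proposal is correct and follows exactly the paper's own argument: the paper's justification of the corollary (given in the Remark preceding it) simply notes that the proof of Theorem~1.2 already yields $\lambda_\alpha(G_2)\le \alpha(m-k_1)+\frac{3}{2}-\frac{5}{2}\alpha$, observes that $\frac{3}{2}-\frac{5}{2}\alpha\le 0$ for $\alpha\ge\frac{3}{5}$, and then invokes Lemma~\ref{le:1}. Your write-up spells out the case split from Theorem~1.2 more explicitly, but the logic is identical.
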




\end{document}